\begin{document}
\mainmatter 

\title{ (2,3)-Cordial Oriented Hypercubes}
\titlerunning{(2,3)-Cordial Oriented Hypercubes}

\author{Jonathan M. Mousley \\ Department of Mathematics and Statistics \\\email{jonathanmousley@gmail.com}\\ \vspace{1em} LeRoy B. Beasley\\ Department of Mathematics and Statistics \\\email{leroy.b.beasley@aggiemail.usu.edu}\\ \vspace{1em} Manuel A. Santana\\ Department of Mathematics and Statistics\\\email{manuelarturosantana@gmail.com}\\  \vspace{1em}\vspace{1em} David E. Brown \\ Department of Mathematics and Statistics\\ \email{david.e.brown@usu.edu}}
\authorrunning{Jonathan M. Mousley, et. al}
\institute{Utah State University. Logan UT 84322, USA,\\}

\maketitle

\begin{abstract}  In this article we investigate the existence of $(2,3)$-cordial labelings of oriented hypercubes. In this investigation, we determine that there exists a $(2,3)$-cordial oriented hypercube for any dimension divisible by $3$. Next, we provide examples of $(2,3)$-cordial oriented hypercubes of dimension not divisible by $3$ and state a conjecture on existence for dimension $3k + 1$. We close by presenting the only 3D oriented hypercubes up to isomorphism that are not $(2,3)$-cordial. \end{abstract}

\def \dd{\hfill \baseb \vskip .5cm}
\def \d{{\noindent \it Proof. } }
\def\ex{\begin{example}}
\def\eex{\end{example}}
\def\exx{\end{example}}
\def\t{\begin{theorem}}
\def\tt{\end{theorem}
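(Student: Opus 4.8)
The main result asserts that for every positive integer $k$ the hypercube $Q_{3k}$ admits an orientation together with a vertex labeling $f\colon V(Q_{3k})\to\{0,1\}$ that is $(2,3)$-cordial. I would prove this by an explicit construction rather than by induction. Recall that $Q_n$ has $2^n$ vertices, which we identify with the binary strings $x=x_1x_2\cdots x_n$, and $n\,2^{n-1}$ edges; moreover, for each $i\in\{1,\dots,n\}$ there are exactly $2^{n-1}$ edges \emph{in direction $i$}, namely those joining a string $x$ to the string obtained from $x$ by flipping coordinate $i$. The induced arc labeling I use is the natural one: an arc $(u,v)$ receives the label $f(v)-f(u)\bmod 3$, so a monochromatic edge gets label $0$ (regardless of direction), a bichromatic edge directed from its $0$-endpoint to its $1$-endpoint gets label $1$, and the reverse direction gets label $2$.

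For the labeling I take $f(x)=x_1\oplus x_2\oplus\cdots\oplus x_{2k}$, the parity of the first $2k$ coordinates. Since $f$ depends on at least one coordinate it is balanced, so $|f^{-1}(0)|=|f^{-1}(1)|=2^{3k-1}$, and the vertex part of the cordiality condition holds with equality. Next I classify the edges: flipping coordinate $i$ changes the value of $f$ precisely when $i\le 2k$, so every edge in one of the $2k$ directions $1,\dots,2k$ is bichromatic and every edge in one of the $k$ directions $2k+1,\dots,3k$ is monochromatic. Hence $f$ produces exactly $k\,2^{3k-1}$ monochromatic edges and $2k\,2^{3k-1}$ bichromatic edges, the latter being an even number.

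Now for the orientation: orient the monochromatic edges arbitrarily, since they get label $0$ in any case; split the $2k\,2^{3k-1}$ bichromatic edges into two blocks of size $k\,2^{3k-1}$, and orient one block from its $0$-endpoints to its $1$-endpoints (producing label $1$) and the other block the opposite way (producing label $2$). The three arc-label classes then have sizes $k\,2^{3k-1}$, $k\,2^{3k-1}$, $k\,2^{3k-1}$: they are all equal, so the arc part of the cordiality condition holds as well, and the oriented $Q_{3k}$ just built is $(2,3)$-cordial.

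There is no serious obstacle here once the definition is fixed; the point to verify carefully is the exact form of the induced arc labeling, and the only place the hypothesis $3\mid n$ is used is in arranging that the monochromatic count is exactly one third of $n\,2^{n-1}$, which is precisely what the split into $2k$ ``flipping'' and $k$ ``non-flipping'' coordinate directions accomplishes (this is also why $n\not\equiv 0\pmod 3$ is genuinely harder, since then $n\,2^{n-1}$ is never divisible by $3$ and one cannot hope for all three classes equal). If one instead wanted an inductive proof building $Q_{3k}$ from $Q_3$ — via Cartesian products, or by repeatedly doubling the dimension — the delicate step would be recombining the labelings of the pieces so that all three arc classes and both vertex classes remain balanced simultaneously; the direct construction above sidesteps that bookkeeping entirely.
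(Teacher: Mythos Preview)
Your argument is correct, and it is genuinely different from the paper's. The paper proceeds by induction in steps of three: starting from the explicit $(2,3)$-cordial orientation of $Q_3$ in Example~\ref{base_case}, it builds $Q_{k+1}$, $Q_{k+2}$, and $Q_{k+3}$ in turn from a $(2,3)$-cordial $Q_k$, at each stage joining a labeled copy to either its complementary labeling $\overline{L}$ or an identical copy and tracking the triple $\Lambda$ through the three doublings until the counts rebalance. Your construction sidesteps all of this bookkeeping: by taking $f$ to be the parity of the first $2k$ coordinates you arrange \emph{a priori} that exactly one third of the edge directions are monochromatic, and then a single parity split of the bichromatic edges finishes the job. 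This is shorter and more transparent, and it makes explicit why the hypothesis $3\mid n$ is exactly what is needed (the monochromatic count must be $n\,2^{n-1}/3$). The paper's inductive route, on the other hand, stays closer to the recursive structure of the hypercube and mirrors the doubling constructions used elsewhere in the paper (e.g.\ in the $4$- and $7$-dimensional examples), so it fits the surrounding narrative better even though it is longer. One cosmetic point: the paper works with arc labels in $\{-1,0,1\}$ via $g(\overrightarrow{uv})=f(v)-f(u)$ rather than reducing modulo~$3$, so your label ``$2$'' is their ``$-1$''; this does not affect the argument.
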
}
\def\D{\begin{definition}}
\def\DD{\end{definition}}
\def\l{\begin{lemma}}
\def\ll{\end{lemma}}
\def\c{\begin{corollary}}
\def\cc{\end{corollary}}
\def\cj{\begin{conjecture}}
\def\cjj{\end{conjecture}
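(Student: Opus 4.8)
The plan is to reduce the existence of a $(2,3)$-cordial orientation of $Q_{3k+1}$ to a single exact statement about cut sizes, and then to realize that cut by interpolation. First I would record the structural fact that drives everything: an arc receives induced label $0$ exactly when its two endpoints share a vertex label, whereas the two nonzero labels are carried precisely by the bichromatic arcs and are separated only by orientation. Thus, writing $S$ for the set of $0$-labeled vertices and $C=|\partial S|$ for the cut, a balanced vertex labeling forces $|S|=2^{n-1}$, and orienting the cut edges as evenly as possible gives edge-class sizes $e_0=E-C$, $e_1=\lceil C/2\rceil$, $e_2=\lfloor C/2\rfloor$ with $E=n2^{n-1}$. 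The handshake identity $\sum_{v\in S}\deg v=n2^{n-1}=E=2i_S+C$, where $i_S$ is the number of edges inside $S$, gives $C=E-2i_S$; in particular every balanced cut is even. Imposing $|e_i-e_j|\le 1$ then pins down a \emph{unique} admissible cut: since $E\equiv(-1)^k\pmod 3$ for $n=3k+1$, one finds $C^*=2(E-1)/3$ when $k$ is even and $C^*=2(E+1)/3$ when $k$ is odd. Equivalently, the conjecture holds precisely when $Q_{3k+1}$ admits a balanced set $S$ with exactly $i^*=(E-C^*)/2$ induced edges, a target interior to the feasible range since $i^*\approx E/6$ while $0\le i_S\le (n-1)2^{n-2}$.

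Next I would bracket $i^*$ and interpolate. The parity class is a balanced independent set with $i_S=0$, and a sub-hypercube obtained by fixing one coordinate is a balanced set with $i_S=(n-1)2^{n-2}$; since $0<i^*<(n-1)2^{n-2}$, these straddle the target. Here the divisible-by-three theorem can be used to seed a configuration already close to $i^*$: taking the prism $S_0\times\{A,B\}$ over the $0$-class $S_0$ of the known cordial coloring of $Q_{3k}$ in the decomposition $Q_{3k+1}=Q_{3k}\,\square\,K_2$ gives a balanced set with $i_S=(k+1)2^{3k-1}$, just above $i^*$. The plan is to move from a configuration with $i_S>i^*$ toward the target by balance-preserving swaps, each deleting a vertex $x$ from $S$ and inserting a vertex $y\notin S$, thereby changing $i_S$ by $\deg_S y-\deg_S x-[x\sim y]$. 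Selecting at each step a vertex $x$ of large internal degree together with a non-adjacent $y$ whose internal degree is one smaller yields a change of exactly $-1$, so that every integer between the brackets — in particular $i^*$ — is attained; orienting the resulting $C^*$ cut edges into $\lceil C^*/2\rceil$ and $\lfloor C^*/2\rfloor$ of each direction then completes a cordial orientation.

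The hard part will be exactness. Because $C^*$ (equivalently $i^*$) is a single prescribed value with no tolerance, the interpolation must land on it exactly, and a mere step-size bound does not suffice: not every integer in $[0,(n-1)2^{n-2}]$ is a realizable induced-edge-count. Indeed, in $Q_3$ the attainable values are $\{0,2,3,4\}$, the value $1$ being impossible, so genuine gaps occur near the ends of the range. The crux is therefore to prove that no gap occurs at the interior target: that from any balanced $S$ whose value of $i_S$ brackets $i^*$ there is always a balance-preserving swap changing $i_S$ by exactly $1$ in the required direction — intuitively, that in the interior of the feasible range the cube always presents boundary vertices of every internal degree, so that both a $+1$ and a $-1$ move are available. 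Securing this $\pm1$-adjustability uniformly in $k$, together with the parity bookkeeping separating the $k$ even and $k$ odd cases, is the one step I expect to resist a short argument, and is presumably why existence for dimension $3k+1$ has so far only been conjectured.
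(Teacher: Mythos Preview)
The paper does not prove this statement: it is Conjecture~1, supported only by the explicit constructions in dimensions $4$ and $7$ (Examples~4 and~5). There is no proof in the paper to compare against; your proposal already attempts considerably more than the paper does.

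Your reduction is correct. Since the orientation may be chosen after the vertex labeling, the monochromatic edge count $e_0=E-C$ is the only obstruction, and your parity and modular computations pinning down the unique admissible cut $C^*$ (equivalently $i^*\approx E/6$) check out in both the $k$ even and $k$ odd cases; the prism-seeding value $i_S=(k+1)2^{3k-1}$ is also right, using $i_{S_0}=i_{\bar S_0}$, which follows from regularity and $|S_0|=|\bar S_0|$. But the gap you yourself flag is the entire difficulty, and it is genuine: your $Q_3$ example already shows that the set of attainable balanced $i_S$ values need not be an interval, so the swap-by-$\pm1$ heuristic cannot be invoked without justification. The assertion that in the interior of the range one can always find $x\in S$, $y\notin S$ with $x\not\sim y$ and $\deg_S y=\deg_S x-1$ is exactly the missing lemma, and nothing in your outline supplies it. As written this is a reasonable programme rather than a proof --- consistent with the paper's decision to state the result only as a conjecture.
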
}
\def\e{\begin{equation}}
\def\ee{\end{equation}}
\def\p{\begin{proposition}}
\def\pp{\end{proposition}}
\def\dn{{{\cal D}_n}}

\renewcommand{\vec}[1]{\overrightarrow{#1}}

\def\myedgestyle{-{Latex[length=2mm]}}

\def\N{\mathbb{N}}
\def\Z{\mathbb{Z}}

\font\cy=wncyr10

\def\bp{\mathbf p}
\newcommand{\baseb}{\hfill \rule{2mm}{2mm}}

\section{Introduction.}
Let $G = (V,E)$ be an undirected graph with vertex set $V$ and edge set $E$, a convention we will use throughout this paper.  A $(0,1)$-labeling of the vertex set is a mapping $f:V\to \{0,1\}$  and is  said to be {\em friendly} if approximately one half of the vertices are labeled 0 and the others labeled 1. An induced labeling of the edge set  is a mapping $g:E\to \{0,1\}$ where for an edge $uv, g(uv)= \hat{g}(f(u),f(v))$ for some $\hat{g}:\{0,1\}\times\{0,1\}\to \{0,1\}$ and is said to be cordial if $f$ is friendly and about one half the edges of $G$ are labeled 0.  A graph, $G$,  is called {\em cordial} if there exists a cordial induced labeling of the edge set of $G$ \cite{C}.

In this article we investigate a labeling of directed graphs that is not simply a cordial labeling of the underlying undirected graph. The labeling scheme we investigate here was introduced by L.B. Beasley in \cite{B}. Let $D = (V,A)$ be a directed graph with vertex set $V$ and arc set $A$ with a friendly vertex set mapping $f$. Let  $g:A\to \{-1,0,1\}$ be the induced  labeling of the arcs of $D$  such that for any arc initiating at $u$ and terminating at $v$, $\overrightarrow{uv}$, $g(\overrightarrow{uv}) = f(v) - f(u)$. $D$ is said to be $(2,3)$-cordial if there exists a friendly vertex set mapping $f$ such that $g$ labels approximately one third of arcs $0$, approximately one third of arcs $1$, and approximately one third of arcs $-1$. Applications of balanced graph labelings can be found in the introduction of \cite{D}. 

In \cite{B3}, $(2,3)$-cordial labelings are investigated on oriented trees, oriented paths, orientations of the Petersen graph, and complete graphs. In this article we consider $(2,3)$-cordial labelings on oriented hypercubes. We confirm the existence of $(2,3)$-cordial oriented hypercubes for every dimension $3k$ for $k \in \N$. Additionally, we provide examples of $(2,3)$-cordial oriented hypercubes for dimensions $4$ and $7$ and conjecture that there exists a $(2,3)$-oriented hypercube of dimension $3k + 1$ for every $k \in \mathbb{N}$. We close by presenting the only 3D oriented hypercubes up to isomorphism that are not $(2,3)$-cordial, that is we present the only two 3D oriented hypercube up to isomorphism that do not admit a $(2,3)$-cordial labeling. 

\section{Preliminaries.}
\begin{definition}
 Let $Z$ be a finite set and $f:Z\to\{0,1\}$ be a mapping.  The mapping $f$ is a called a $(0,1)$-labeling of the set $Z$.  If $-1\leq |f^{-1}(0)| - |f^{-1}(1)|\leq 1$, that is, the number of elements of $Z$ labeled 0 and the number of elements of $Z$ labeled 1 are about equal, then we say that the labeling $f$ is \underline{\em friendly}.
\end{definition}

Let $G=(V,E)$ be an undirected graph with vertex set $V$ and edge set $E$.  Let  $f:V\to \{0,1\}$  be a labeling of $V$.   An induced labeling of the edge set  is a mapping $g:E\to \{0,1\}$ where for an edge $uv, g(uv)= \hat{g}(f(u),f(v))$ for some $\hat{g}:\{0,1\}\times\{0,1\}\to \{0,1\}$ and is said to be cordial if $f$ and $g$ are both  friendly labelings.  A graph $G$ is {\em cordial} if there exists a cordial induced labeling of the edge set of $G$.  In this article, as in \cite{B}, we define a cordial labeling of directed graphs that is not  simply a cordial labeling of the underlying undirected graph.  

\begin{definition} Let $D=(V,A)$ be a directed graph with vertex set $V$ and arc set $A$.  Let $f:V\to \{0,1\}$ be a friendly vertex labeling and let $g$ be the  induced labeling of the arc set,  $g:A\to \{0,1,-1\}$ where for an arc $\overrightarrow{uv}, g(\overrightarrow{uv})=f(v)-f(u)$.  The labelings $f$ and $g$  are  {\em $(2,3)$-cordial} if $f$ is friendly and about one third the arcs of $D$ are labeled 1, one third are labeled -1 and one third labeled 0, that is, for any $i,j\in\{0,1,-1\},$  $-1\leq |g^{-1}(i)| - |g^{-1}(j)|\leq 1 $.  
   A digraph, $D$,  is called {\em $(2,3)$-cordial} if there exists  $(2,3)$-cordial  labelings $f$ of the vertex set and $g$  of the arc set of $D$.   An undirected graph $G$ is said to be \underline{\em $(2,3)$-orientable} if there is an orientation of the edges of $G$ which is a $(2,3)$-cordial digraph.
 \label{def_induced_label}
\end{definition}
See \cite{B3} for an equivalent definition of $(2,3)$-cordiality and $(2,3)$-orientability beginning from the view of quasi-groups and \textit{quasi-group cordiality} introduced in \cite{P}.
\begin{definition}
Let $\mathcal{D}_n$ be the set of all digraphs on $n$ vertices. We will define $\mathcal{T}_n$ as the subset of $\mathcal{D}_n$ that consists of all digon-free digraphs, where a digon is a $2$ cycle on a digraph.
\end{definition}
\begin{definition} Let $D=(V,A)$ be a digraph with vertex labeling $f:V\to \{0,1\}$ and with induced arc labeling $g:A\to\{0,1,-1\}$.  Define $\Lambda_{f,g} : \mathcal{D}_n\to\mathbb{N}^3$ by $\Lambda_{f,g}(D)=(\alpha,\beta,\gamma)$ where $\alpha=|g^{-1}(1)|, \beta=|g^{-1}(-1)|,$ and $\gamma =|g^{-1}(0)|$.  \end{definition}

Let $D\in\mathcal{T}_n$ and let $D^R$ be the digraph such that every arc of $D$ is reversed, so that $\overrightarrow{uv}$ is an arc in $D^R$ if and only if $\overrightarrow{vu}$ is an arc in $D$.  Let $f$ be a $(0,1)$-labeling of the vertices of $D$ and let $g(\overrightarrow{uv}) =f(v)-f(u)$ so that $g$ is a $(1,-1,0)$-labeling of the arcs of $D$.  Let $\overline{f}$ be the complementary $(0,1)$-labeling of the vertices of $D$, so that $\overline{f}(v)=0$ if and only if $f(v)=1$.  Let $\overline{g}$ be the corresponding induced arc labeling of $D$, $\overline{g}(\overrightarrow{uv}) =\overline{f}(v)-\overline{f}(u)$.

\begin{lemma} Let $D\in\mathcal{T}_n$ with vertex labeling $f$ and induced arc labeling $g$.  Let $\Lambda_{f,g}(D)=(\alpha,\beta,\gamma)$.   Then \begin{enumerate}\item $\Lambda_{f,g}(D^R)=(\beta,\alpha,\gamma)$. \item $\Lambda_{\overline{f},\overline{g}}(D)=(\beta,\alpha,\gamma)$, and \item $\Lambda_{\overline{f},\overline{g}}(D^R)=\Lambda_{f,g}(D).$\end{enumerate} 
\label{lambda_lemma}\end{lemma}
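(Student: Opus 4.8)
The plan is to prove the three statements in sequence, observing that each amounts to tracking how the arc labels transform under two basic operations: reversing arcs and complementing the vertex labeling.

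\medskip

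\noindent\textbf{Part (1).} First I would fix an arc $\overrightarrow{uv}$ of $D$. By definition of $D^R$, the corresponding arc in $D^R$ is $\overrightarrow{vu}$, and since we keep the \emph{same} vertex labeling $f$, the induced label is $g'(\overrightarrow{vu}) = f(u) - f(v) = -\bigl(f(v)-f(u)\bigr) = -g(\overrightarrow{uv})$. Thus reversal negates every arc label. Consequently an arc contributes to $g'^{-1}(1)$ exactly when it contributed to $g^{-1}(-1)$, and vice versa, while arcs labeled $0$ stay labeled $0$. Counting gives $\Lambda_{f,g}(D^R) = (\beta,\alpha,\gamma)$. (Here it is convenient that $D\in\mathcal{T}_n$ is digon-free, so $\overrightarrow{uv}$ and $\overrightarrow{vu}$ are not both arcs and there is no double-counting ambiguity.)

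\medskip

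\noindent\textbf{Part (2).} Again fix an arc $\overrightarrow{uv}$ of $D$. Since $\overline{f}(w) = 1 - f(w)$ for every vertex $w$, we compute $\overline{g}(\overrightarrow{uv}) = \overline{f}(v) - \overline{f}(u) = \bigl(1-f(v)\bigr) - \bigl(1-f(u)\bigr) = f(u)-f(v) = -g(\overrightarrow{uv})$. So complementing the vertex labeling also negates every arc label, by exactly the same bookkeeping as in Part (1) the label counts swap $\alpha$ and $\beta$ and fix $\gamma$, giving $\Lambda_{\overline{f},\overline{g}}(D) = (\beta,\alpha,\gamma)$.

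\medskip

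\noindent\textbf{Part (3).} The cleanest route is to compose the two previous computations rather than redo the arithmetic. Applying Part (2) to the digraph $D^R$ (with its labeling $f$ and induced labeling $g$) gives $\Lambda_{\overline{f},\overline{g}}(D^R) = (\beta',\alpha',\gamma')$ where $(\alpha',\beta',\gamma') = \Lambda_{f,g}(D^R)$; and by Part (1), $(\alpha',\beta',\gamma') = (\beta,\alpha,\gamma)$. Hence $\Lambda_{\overline{f},\overline{g}}(D^R) = (\alpha,\beta,\gamma) = \Lambda_{f,g}(D)$. Alternatively one can argue directly: on the arc $\overrightarrow{vu}$ of $D^R$ the induced complemented label is $\overline{f}(u)-\overline{f}(v) = f(v)-f(u) = g(\overrightarrow{uv})$, so the two negations cancel and every arc label is preserved, whence the triples coincide.

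\medskip

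\noindent There is no real obstacle here: the whole lemma is a direct unwinding of the definition $g(\overrightarrow{uv}) = f(v)-f(u)$, and the only point requiring a word of care is making sure the reversal map on arcs is a genuine bijection between the arc sets of $D$ and $D^R$ — which is exactly what the digon-free hypothesis $D\in\mathcal{T}_n$ guarantees, so that "number of arcs with a given label" is well defined and the counts transport correctly.
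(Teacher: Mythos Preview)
Your proof is correct and follows the same approach as the paper's, which simply observes that both reversing arcs and complementing vertex labels negate each arc label; you have merely made the arithmetic explicit. One small aside: the digon-free hypothesis is not actually needed for the reversal map $\overrightarrow{uv}\mapsto\overrightarrow{vu}$ to be a bijection between the arc sets of $D$ and $D^R$---it always is---so your closing remark slightly overstates its role.
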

\begin{proof} If an arc is labeled 1, -1, 0 respectively then reversing the labeling of the incident vertices gives a labeling of -1, 1, 0 respectively,  If an arc $\overrightarrow{uv}$ is labeled 1, -1, 0 respectively, then $\overrightarrow{vu}$ would be labeled -1, 1, 0 respectively. \end{proof}
 
\begin{example}Now, consider a graph, {\cy X}$_n$ in $\mathcal{G}_n$   consisting of three parallel edges and n-6 isolated vertices.  Is  {\cy X}$_n$  $(2,3)$-orientable?  If $n=6$, the answer is no, since any friendly labeling of the six vertices would have either  no arcs labeled 0 or two arcs labeled 0.  In either case, the orientation would never be $(2,3)$-cordial. That is  {\cy X}$_6$ is not $(2,3)$-orientable, however  with additional vertices like  {\cy X}$_7$ the graph is $(2,3)$-orientable.\end{example}

Thus, for our investigation here, we will use the convention that a graph, $G$, is $(2,3)$-orientable/$(2,3)$-cordial  if and only if  the subgraph of $G$ induced by its non-isolated vertices, $\hat{G}$, is $(2,3)$-orientable/$(2,3)$-cordial.

\section{Existence.} 
We begin with examples of (2,3)-cordial oriented hypercubes for dimensions less than and equal to $3$.

\begin{example}[Dimension $1$] Given in Figure \ref{exist_d1} is a $1$-dimensional oriented hypercube $C_1$ that is (2,3)-cordial as by the friendly vertex labeling $f$ shown, $\Lambda_{f,g}(C_1) = (1,0,0)$.\label{example_d1}\end{example}

\begin{example}[Dimension $2$] Given in Figure \ref{exist_d2} is a $2$-dimensional oriented hypercube $C_2$ that is (2,3)-cordial as by the friendly vertex labeling $f$ shown, $\Lambda_{f,g}(C_2) = (1,1,2)$.\label{example_d2}\end{example}

\begin{figure}
\centering
\begin{subfigure}[b]{0.4\textwidth}
\centering
\begin{tikzpicture}[scale=1.50]
\tikzset{vertex/.style = {shape=circle, draw, minimum size=1em}}


\node[vertex] (a) at (0,1) {0};
\node[vertex] (b) at (0,-1) {1};
\draw[\myedgestyle] (a) to node[left]{$1$}(b);

\end{tikzpicture}
\caption{$k = 1$}
\label{exist_d1}
\end{subfigure}
\hspace{2em}
\begin{subfigure}[b]{0.4\textwidth}
\centering
\begin{tikzpicture}[scale = 1.50]
\tikzset{vertex/.style = {shape=circle, draw, minimum size=1em}}
\tikzset{edge/.style = {\myedgestyle}}

\node[vertex] (a1) at (2,1) {1};
\node[vertex] (a2) at (4,1) {1};
\node[vertex] (a3) at (2,-1) {0};
\node[vertex] (a4) at (4,-1) {0};

\draw[\myedgestyle] (a1) to node[above]{$0$} (a2);
\draw[\myedgestyle] (a2) to node[right]{$-1$} (a4);
\draw[\myedgestyle] (a3) to node[below]{$0$} (a4);
\draw[\myedgestyle] (a3) to node[left]{$1$} (a1);

\end{tikzpicture}
\caption{$k = 2$}
\label{exist_d2}
\end{subfigure}
\caption{$(2,3)$-cordial $k$-dimensional oriented hypercubes}
\end{figure}
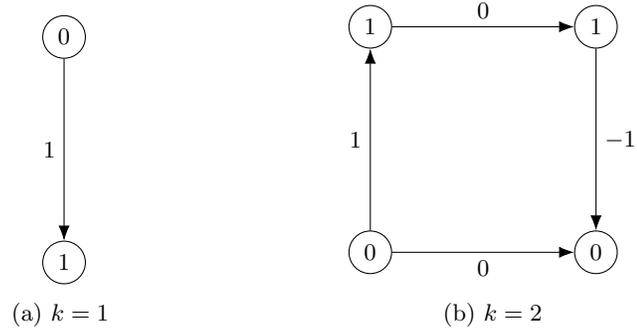

\begin{example}[Dimension $3$] Given in Figure \ref{exist_d3} is a $3$-dimensional oriented hypercube $C_3$ that is (2,3)-cordial as by the friendly vertex labeling $f$ shown, $\Lambda_{f,g}(C_3) = (4,4,4)$. 
\label{base_case}\end{example}

\begin{figure}
\centering
\begin{tikzpicture}[scale = 1.35]
\tikzset{vertex/.style = {shape=circle, draw, fill = white,scale = 1}}
\tikzset{vertexpurple/.style = {shape=circle, draw,fill = purple, text = white,scale = 1}}
\tikzset{vertexorange/.style = {shape=circle, draw,fill = orange, text = white,scale = 1}}
\tikzset{edge/.style = {->,> = latex, arrows={-Stealth[scale=1.4]}}}

\newcommand{\mya}{1.3} 
\newcommand{\ax}{0.3*\mya}
\newcommand{\ay}{0.7*\mya}

\newcommand{\originx}{-0.4*\mya}
\newcommand{\originy}{-0.30*\mya}

\node[vertex] (a1) at (-\mya,\mya) {$\bf 1$};
\node[vertex] (a2) at (\mya,\mya) {$0$};
\node[vertex] (a3) at (-\mya,-\mya) {$0$};
\node[vertexorange] (a4) at (\mya,-\mya) {$1$};
\node[vertexpurple] (a5) at (-\mya + \ax,\mya + \ay) {$1$};
\node[vertex] (a6) at (\mya + \ax,\mya + \ay) {$0$};
\node[vertex] (a7) at (-\mya + \ax,-\mya + \ay) {$0$};
\node[vertex] (a8) at (\mya + \ax,-\mya + \ay) {$\bf 1$};

\draw[edge] (a1) to node[above]{$-1$}(a2);
\draw[edge] (a4) to node[below]{$-1$}(a3);
\draw[edge] (a3) to node[left]{$1$}(a1);
\draw[edge] (a2) to node[left]{$1$}(a4);
\draw[edge] (a5) to node[above]{$-1$}(a6);
\draw[edge] (a6) to node[right]{$1$}(a8);
\draw[edge] (a8) to node[below]{$-1$}(a7);
\draw[edge] (a7) to node[left]{$1$}(a5);
\draw[edge] (a1) to node[above left]{$0$}(a5);
\draw[edge] (a2) to node[left]{$0$}(a6);
\draw[edge] (a3) to node[below right]{$0$}(a7);
\draw[edge] (a4) to node[below right]{$0$}(a8);
\end{tikzpicture}
\caption{$(2,3)$-cordial 3D oriented hypercube}
\label{exist_d3}
\end{figure}
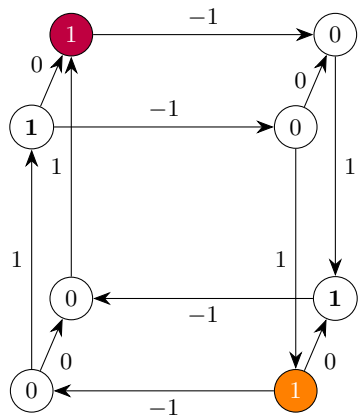

\newcommand{\lab}{h} 
\newcommand{\labb}{l} 

\newcommand{\ver}{V_h} 
\newcommand{\verr}{V_l} 

\newcommand{\bi}{\beta} 
\newcommand{\bii}{\delta} 
\newcommand{\biii}{\phi} 

In Examples \ref{example_d1}, \ref{example_d2}, and \ref{base_case}, we see for dimension less than or equal to 3, there exist $(2,3)$-cordial oriented hypercubes. The question of existence remains unanswered for dimension greater than $3$. In the following theorem, this question is answered for the case in which dimension is a multiple of $3$.

\begin{theorem}
Let $n$ be a multiple of $3$, then there exists an $n$-dimensional oriented hypercube $C_n$ that is (2,3)-cordial.
\end{theorem}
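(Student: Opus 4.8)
The plan is to induct on $k$ where $n=3k$, building $C_{3k}$ from $C_{3(k-1)}$ via the Cartesian product. The base case $k=1$ is exactly Example~\ref{base_case}: the oriented hypercube $C_3$ of Figure~\ref{exist_d3} carries a friendly vertex labeling $f_0$ with $\Lambda_{f_0,g_0}(C_3)=(4,4,4)$. It will be convenient to strengthen the inductive statement: I will show that for every $k\ge 1$ there is an orientation $C_{3k}$ of the $3k$-dimensional hypercube together with a friendly vertex labeling whose arc statistics are \emph{exactly} balanced, i.e. $\Lambda(C_{3k})=\bigl(k\,2^{3k-1},\,k\,2^{3k-1},\,k\,2^{3k-1}\bigr)$; since the hypercube has $3k\,2^{3k-1}$ edges, this is the strongest form of $(2,3)$-cordiality. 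The extra precision matters because an off-by-one slack in the hypothesis could otherwise propagate and grow through the recursion.

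For the step, write the $3k$-cube as the Cartesian product of the $3(k-1)$-cube with $Q_3$, so it consists of $2^{3(k-1)}$ disjoint copies (``fibers'') of $Q_3$, one per vertex $v$ of the $3(k-1)$-cube, plus, for each edge $v_1v_2$ of the $3(k-1)$-cube, a perfect matching of $8$ edges joining corresponding vertices of fiber $v_1$ and fiber $v_2$. Let $h$ be the $(2,3)$-cordial vertex labeling of $C_{3(k-1)}$ given by induction. Orient the $12$ edges of each fiber exactly as in $C_3$, and label fiber $v$ by $f_0$ if $h(v)=0$ and by the complementary labeling $\overline{f_0}$ if $h(v)=1$. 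By Lemma~\ref{lambda_lemma}(2), since $\alpha=\beta=4$ for $C_3$, each fiber still contributes $(4,4,4)$ to $\Lambda$ no matter which of the two labelings it received; summing over all fibers yields $\bigl(2^{3k-1},2^{3k-1},2^{3k-1}\bigr)$. Also each fiber carries $4$ zeros and $4$ ones, so the overall vertex labeling is friendly.

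It remains to orient and count the matching edges, and here is the key point. If $h(v_1)=h(v_2)$, the two fibers got the same labeling, so every matching edge between them joins equally-labeled vertices and is labeled $0$ regardless of orientation: contribution $(0,0,8)$. If $h(v_1)\ne h(v_2)$, say $h(v_1)=0,\ h(v_2)=1$, orient every matching edge from the $v_1$-side to the $v_2$-side; the edge at $u\in V(Q_3)$ then gets label $\overline{f_0}(u)-f_0(u)=1-2f_0(u)\in\{1,-1\}$, which is $+1$ for the $4$ vertices with $f_0(u)=0$ and $-1$ for the $4$ with $f_0(u)=1$: contribution $(4,4,0)$. Now the $(2,3)$-cordiality of $C_{3(k-1)}$ is used not as a black box but precisely as follows: the number of bichromatic edges of the $3(k-1)$-cube under $h$ is $|g_h^{-1}(1)|+|g_h^{-1}(-1)|$, which equals twice $|g_h^{-1}(0)|$ by the exact-balance hypothesis — exactly two thirds of all edges are bichromatic. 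Adding $2/3$ of the edges contributing $(4,4,0)$ and $1/3$ contributing $(0,0,8)$ gives a perfectly balanced matching-edge total, and combining with the fiber total $\bigl(2^{3k-1},2^{3k-1},2^{3k-1}\bigr)$ gives $\Lambda(C_{3k})=\bigl(k\,2^{3k-1},k\,2^{3k-1},k\,2^{3k-1}\bigr)$, closing the induction.

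I expect the only genuine subtlety — otherwise this is a routine product argument — to be recognizing and exploiting that the arc statistics of $C_{3(k-1)}$ force a $2\!:\!1$ ratio of bichromatic to monochromatic edges, which is exactly what the $(4,4,0)$ versus $(0,0,8)$ matching contributions need in order to balance. Secondary care is needed to set up the fiber/matching decomposition cleanly and to verify Lemma~\ref{lambda_lemma}(2) applies to fibers relabeled by $\overline{f_0}$ with the unchanged orientation; both are straightforward once stated.
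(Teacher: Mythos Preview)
Your argument is correct, and it takes a genuinely different route from the paper's. Both proofs use the product structure $Q_{3k}\cong Q_{3(k-1)}\times Q_3$, but they slice it in dual ways. The paper keeps eight copies of the big cube $Q_{3(k-1)}$ and adds the three new dimensions one at a time: the first two doubling steps pair a labeled copy with its complement (so the $2^{k}$ new connecting arcs at each step split evenly between $+1$ and $-1$), and the third step pairs two identically labeled copies (so the $2^{k+2}$ new arcs are all $0$). The inductive hypothesis is used only to know that each of the eight $Q_{3(k-1)}$ blocks is internally balanced; the balance of the connecting arcs is forced by the choice of complement-vs-identical at each step, independent of how $h$ colors $Q_{3(k-1)}$.

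Your decomposition instead keeps $2^{3(k-1)}$ small $Q_3$ fibers and pushes the work onto the base-cube labeling $h$. What you gain is a single, uniform description rather than three sequential steps, and a pleasant use of the inductive hypothesis beyond ``each block is balanced'': the exact $(\alpha,\beta,\gamma)$ split of $C_{3(k-1)}$ is what produces the $2{:}1$ bichromatic-to-monochromatic edge ratio matching the $(4,4,0)$ vs.\ $(0,0,8)$ contributions. What the paper's route buys is that it never appeals to how many edges of $Q_{3(k-1)}$ are monochromatic under $h$ --- its connecting-arc counts are determined purely by the doubling pattern --- so it would adapt more easily to settings where exact balance is not available at the previous stage. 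Both are clean; yours is shorter, theirs is more modular.
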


\def\Lk{L_k}
\def\oLk{\overline{L_k}}
\def\Lko{L_{k+1}}
\def\oLko{\overline{L_{k+1}}}
\begin{proof}
We proceed by induction on the dimension $n$ in multiples of $3$. Example \ref{base_case} serves as a base case for $n = 3$. Suppose the claim is true for some $k$ that is a multiple of $3$. Then there exists some oriented hypercube $Q_k = (V_k,A_k)$ of dimension $k$ that is $(2,3)$-cordial. That is, there exists a friendly labeling $f \colon V_k \to \{0,1\}$ such that
$$\Lambda_{f,g} = \left(\frac{1}{3}|A_k|,\frac{1}{3}|A_k|,\frac{1}{3}|A_k|\right)$$
where $g$ is defined as in Definition \ref{def_induced_label}. We aim to construct an oriented hypercube $Q_{k+3} = (V_{k+3},A_{k+3})$ of dimension $k + 3$ that is $(2,3)$-cordial. We begin by constructing an oriented hypercube $Q_{k+1} = (V_{k+1},A_{k+1})$ of dimension $k + 1$. Let $L_k$ denote the digraph $Q_k$ with vertex labeling $f$ and induced arc labeling $g$ applied and $\oLk$ denote the digraph $Q_k$ with vertex labeling $\overline{f}$ and induced arc labeling $\overline{g}$. Now, let us draw arcs from $L_k$ to $\oLk$ according to the trivial digraph isomorphism. That is, define an arc initiating at vertex $x$ in $\Lk$ to vertex $y$ in $\oLk$ if and only if $x = y$. We then label each of these arcs as $\overline{f}(x) - f(x).$ The result is a labeled digraph, call it $L_{k+1}$. By construction, the underlying digraph of $L_{k+1}$ is an oriented hypercube of dimension $k+1$, call it $Q_{k+1}$. Define $f_{k+1}$ and $g_{k+1}$ to be vertex and arc labelings of $Q_{k+1}$ respectively such that $Q_{k+1}$ with labelings $f_{k+1}$ and $g_{k+1}$ applied is the labeled oriented hypercube $\Lko$. As $f_{k+1}$ applies friendly labelings $f$ and $\overline{f}$ to complementary subgraphs of $Q_{k+1}$, $f_{k+1}$ is a friendly labeling. Further, $g_{k+1}$ applies $g$ and $\overline{g}$ to complementary subgraphs of $Q_{k+1}$ and labels each arc $\vec{xx}$ from $\Lk$ to $\oLk$, $\overline{f}(x) - f(x)$. Then, as $f$ and $\overline{f}$ are friendly,
$$\Lambda_{f_{k+1},g_{k+1}} = \left(\frac{2}{3}|A_k| + 2^{k-1}, \frac{2}{3}|A_k| + 2^{k-1},\frac{2}{3}|A_k|\right).$$
Now, we repeat our procedure, constructing an oriented hypercube $Q_{k+2} = (V_{k+2},A_{k+2})$ of dimension $k + 2$. We draw arcs from $L_{k+1}$ and $\overline{L_{k+1}}$. Just as in the previous case, we define an arc from a vertex $x$ in $L_{k+1}$ to vertex $y$ in $\overline{L_{k+1}}$ if and only if $x = y$, and we label this arc $\overline{f_{k+1}}(x) - f_{k+1}(x)$. The result, as in the previous step, is a labeled digraph, call it $L_{k+2}$. The underlying digraph of $L_{k+2}$ is again an oriented hypercube, now of dimension $k + 2$, call it $Q_{k+2}$. As before, define $f_{k+2}$ and $g_{k+2}$ to be vertex and arc labelings of $Q_{k+2}$ respectively such that when applied to $Q_{k+2}$ yield the labeled oriented hypercube $L_{k+2}$. As before, $f_{k+2}$ applies friendly labelings $f_{k+1}$ and $\overline{f_{k+1}}$ to complementary subgraphs, thus $f_{k+2}$ is friendly. Also, $g_{k+2}$ applies $g_{k+1}$ and $\overline{g_{k+1}}$ to complementary subgraphs of $Q_{k+2}$ and labels each arc $\vec{xx}$ from $L_{k+1}$ to $\overline{L_{k+1}}$, $\overline{f_{k+1}}(x) - f_{k+1}(x)$ and $\bar{f}$. As $f_{k+1}$ and $\overline{f_{k+1}}$ are friendly,
$$\Lambda_{f_{k+2},g_{k+2}}\left(\left(\frac{4}{3}|A_k| + 2^k\right) + 2^k, \left(\frac{4}{3}|A_k| + 2^k\right) + 2^k, \frac{4}{3}|A_k|\right).$$
In our final step, we construct an oriented hypercube $Q_{k+3} = (V_{k+3},A_{k+3})$ of dimension $k + 3$ by drawing edges between two identically labeled cubes $L_{k+2}$. We draw an arc from vertex $x$ in the first $L_{k+2}$ to vertex $y$ in the second $L_{k+2}$ if and only if $x = y$ and we label this arc $f_{k+2}(x) - f_{k+2}(x) = 0$. The result is a labeled digraph, call it $L_{k+3}$. The underlying digraph of $L_{k+3}$ is an oriented hypercube of dimension $k + 3$, call it $Q_{k+3}$. Finally, we define $f_{k+3}$ and $g_{k+3}$ to be vertex and arc labelings of $Q_{k+3}$ respectively such that when applied to $Q_{k+3}$ yield the labeled oriented hypercube $L_{k+3}$. Then $f_{k+3}$ simply labels each complementary subgraph $Q_{k+2}$ according to $f_{k+2}$ and $g_{k+3}$ labels each complementary subgraph $Q_{k+2}$ according to $g_{k+2}$ and the newly drawn $2^{k+2}$ edges are labeled $0$. Let $\omega = \frac{4}{3}|A_k| + 2^{k+1}$. Then
$$\Lambda_{f_{k+3},g_{k+3}}(Q_{k+3}) = \left(2\omega, 2\omega, \frac{8}{3}|A_k| + 2^{k+2}\right).$$
Simplifying, we have
$$\Lambda_{f_{k+3},g_{k+3}}(Q_{k+3}) = \left(\frac{1}{3}(k+3)2^{k+2},\frac{1}{3}(k+3)2^{k+2},\frac{1}{3}(k+3)2^{k+2}\right).$$
As $f_{k+3}$ is constructed to be a friendly labeling, the above implies $Q_{k+3}$ is $(2,3)$-cordial.
\end{proof}

\subsection{A Conjecture on Existence for Dimension $3k + 1$.}
We have now answered the question of existence of $(2,3)$-cordial oriented hypercubes for dimension less than and equal to $3$ and all dimensions which are a multiple of $3$. In this section, we now consider the existence of $(2,3)$-cordial oriented hypercubes with dimension $3k + 1$ for $k \in \N$.
\begin{example}[Tesseract, Dimension $4$]
Given in Figures \ref{cordialA} and \ref{cordialB} are two 3D oriented hypercubes, $A$ and $B$, that are $(2,3)$-cordial as demonstrated by the friendly vertex labelings and induced arc labelings shown. 
\begin{figure}
\centering
\begin{subfigure}[b]{0.4\textwidth}
\centering
\begin{tikzpicture}[scale = 1.35]
\tikzset{vertex/.style = {shape=circle, draw, fill = white,scale = 1}}
\tikzset{vertexyellow/.style = {shape=circle, draw,fill = yellow, text = black,scale = 1}}
\tikzset{vertexcyan/.style = {shape=circle, draw,fill = cyan, text = black,scale = 1}}
\tikzset{edge/.style = {->,> = latex, arrows={-Stealth[scale=1.4]}}}

\newcommand{\mya}{1.3} 
\newcommand{\ax}{0.3*\mya}
\newcommand{\ay}{0.7*\mya}

\newcommand{\originx}{-0.4*\mya}
\newcommand{\originy}{-0.30*\mya}

\node[vertexyellow] (a1) at (-\mya,\mya) {$\bf 0$};
\node[vertex] (a2) at (\mya,\mya) {$1$};
\node[vertex] (a3) at (-\mya,-\mya) {$0$};
\node[vertex] (a4) at (\mya,-\mya) {$1$};
\node[vertex] (a5) at (-\mya + \ax,\mya + \ay) {$1$};
\node[vertex] (a6) at (\mya + \ax,\mya + \ay) {$0$};
\node[vertex] (a7) at (-\mya + \ax,-\mya + \ay) {$1$};
\node[vertexcyan] (a8) at (\mya + \ax,-\mya + \ay) {$\bf 0$};

\draw[edge] (a1) to node[above]{$1$}(a2);
\draw[edge] (a4) to node[below]{$-1$}(a3);
\draw[edge] (a3) to node[left]{$0$}(a1);
\draw[edge] (a2) to node[left]{$0$}(a4);
\draw[edge] (a5) to node[above]{$-1$}(a6);
\draw[edge] (a6) to node[right]{$0$}(a8);
\draw[edge] (a8) to node[below]{$1$}(a7);
\draw[edge] (a7) to node[left]{$0$}(a5);
\draw[edge] (a1) to node[above left]{$1$}(a5);
\draw[edge] (a2) to node[left]{$-1$}(a6);
\draw[edge] (a3) to node[below right]{$1$}(a7);
\draw[edge] (a4) to node[below right]{$-1$}(a8);
\end{tikzpicture}
\caption{Cube $A$}
\label{cordialA}
\end{subfigure}
\hspace{4em}
\begin{subfigure}[b]{0.4\textwidth}
\centering
\begin{tikzpicture}[scale = 1.35]
\tikzset{vertex/.style = {shape=circle, draw, fill = white,scale = 1}}
\tikzset{vertexpink/.style = {shape=circle, draw, fill = pink, text = black,scale = 1}}
\tikzset{vertexlime/.style = {shape=circle, draw, fill = lime,text = black,scale = 1}}
\tikzset{edge/.style = {->,> = latex, arrows={-Stealth[scale=1.4]}}}

\newcommand{\mya}{1.3} 
\newcommand{\ax}{0.3*\mya}
\newcommand{\ay}{0.7*\mya}

\node[vertex] (b1) at (-\mya,\mya) {$1$};
\node[vertexpink] (b2) at (\mya,\mya) {$\bf 0$};
\node[vertex] (b3) at (-\mya,-\mya) {$1$};
\node[vertex] (b4) at (\mya,-\mya) {$1$};
\node[vertex] (b5) at (-\mya + \ax,\mya + \ay) {$0$};
\node[vertex] (b6) at (\mya + \ax,\mya + \ay) {$1$};
\node[vertexlime] (b7) at (-\mya + \ax,-\mya + \ay) {$\bf 0$};
\node[vertex] (b8) at (\mya + \ax,-\mya + \ay) {$0$};

\draw[edge] (b1) to node[above]{$-1$}(b2);
\draw[edge] (b4) to node[below]{$0$}(b3);
\draw[edge] (b3) to node[left]{$0$}(b1);
\draw[edge] (b2) to node[left]{$1$}(b4);
\draw[edge] (b5) to node[above]{$1$}(b6);
\draw[edge] (b8) to node[right]{$1$}(b6);
\draw[edge] (b8) to node[below]{$0$}(b7);
\draw[edge] (b7) to node[left]{$0$}(b5);
\draw[edge] (b1) to node[above left]{$-1$}(b5);
\draw[edge] (b2) to node[left]{$1$}(b6);
\draw[edge] (b3) to node[below right]{$-1$}(b7);
\draw[edge] (b4) to node[below right]{$-1$}(b8);
\end{tikzpicture}
\caption{Cube $B$}
\label{cordialB}
\end{subfigure}
\caption{$(2,3)$-cordial 3D oriented hypercubes, $A$ and $B$}
\label{q4}
\end{figure}
In Figure \ref{q4_construction}, edges are drawn between the vertices of the oriented cube $B$ (outer) of Figure \ref{cordialB} and the vertices of oriented cube $A$ (inner) of Figure \ref{cordialA}. By the induced arc labeling scheme $g$, $2$ of these $8$ edges (red) receive an induced label of $0$ regardless of their orientation, and the remaining $6$ edges (dashed) can be oriented such that $3$ receive label $1$ and $3$ receive label $-1$, yielding a 4D oriented hypercube. As the outer and inner cubes of Figure \ref{q4_construction} have $(2,3)$-cordial labelings applied, this is to say the dashed arcs in Figure \ref{q4_construction} can be oriented such that the result is a $(2,3)$-cordial 4D oriented hypercube.
\end{example}

\begin{figure}
\centering
\begin{tikzpicture}[scale = 1.15]
\tikzset{vertex/.style = {shape=circle, draw, fill = white,scale = 0.8}}
\tikzset{vertexyellow/.style = {shape=circle, draw,fill = yellow, text = black,scale = 0.8}}
\tikzset{vertexcyan/.style = {shape=circle, draw,fill = cyan, text = black,scale = 0.8}}
\tikzset{vertexpink/.style = {shape=circle, draw, fill = pink, text = black,scale = 0.8}}
\tikzset{vertexlime/.style = {shape=circle, draw, fill = lime,text = black,scale = 0.8}}
\tikzset{edge/.style = {->,> = latex, arrows={-Stealth[scale=1.1]}}}

\newcommand{\mya}{1} 
\newcommand{\ax}{0.3*\mya}
\newcommand{\ay}{0.7*\mya}

\newcommand{\myb}{3} 
\newcommand{\bx}{0.3*\myb}
\newcommand{\by}{0.7*\myb}

\newcommand{\originx}{-0.3*\mya}
\newcommand{\originy}{-0.3*\mya}

\node[vertexyellow] (a1) at (-\mya,\mya) {$\bf 0$};
\node[vertex] (a2) at (\mya,\mya) {$1$};
\node[vertex] (a3) at (-\mya,-\mya) {$0$};
\node[vertex] (a4) at (\mya,-\mya) {$1$};
\node[vertex] (a5) at (-\mya + \ax,\mya + \ay) {$1$};
\node[vertex] (a6) at (\mya + \ax,\mya + \ay) {$0$};
\node[vertex] (a7) at (-\mya + \ax,-\mya + \ay) {$1$};
\node[vertexcyan] (a8) at (\mya + \ax,-\mya + \ay) {$\bf 0$};

\node[vertex] (b1) at (-\myb + \originx,\myb + \originy) {$1$};
\node[vertexpink] (b2) at (\myb + \originx,\myb + \originy) {$\bf 0$};
\node[vertex] (b3) at (-\myb + \originx,-\myb + \originy) {$1$};
\node[vertex] (b4) at (\myb + \originx,-\myb + \originy) {$1$};
\node[vertex] (b5) at (-\myb + \bx + \originx,\myb + \by + \originy) {$0$};
\node[vertex] (b6) at (\myb + \bx + \originx,\myb + \by + \originy) {$1$};
\node[vertexlime] (b7) at (-\myb + \bx + \originx,-\myb + \by + \originy) {$\bf 0$};
\node[vertex] (b8) at (\myb + \bx + \originx,-\myb + \by + \originy) {$0$};

\draw[edge] (a1) to (a2);
\draw[edge] (a4) to (a3);
\draw[edge] (a3) to (a1);
\draw[edge] (a2) to (a4);
\draw[edge] (a5) to (a6);
\draw[edge] (a6) to (a8);
\draw[edge] (a8) to (a7);
\draw[edge] (a7) to (a5);
\draw[edge] (a1) to (a5);
\draw[edge] (a2) to (a6);
\draw[edge] (a3) to (a7);
\draw[edge] (a4) to (a8);

\draw[edge] (b1) to (b2);
\draw[edge] (b4) to (b3);
\draw[edge] (b3) to (b1);
\draw[edge] (b2) to (b4);
\draw[edge] (b5) to (b6);
\draw[edge] (b8) to (b6);
\draw[edge] (b8) to (b7);
\draw[edge] (b7) to (b5);
\draw[edge] (b1) to (b5);
\draw[edge] (b2) to (b6);
\draw[edge] (b3) to (b7);
\draw[edge] (b4) to (b8);

\foreach \x in {1,2,3,5,6,7} {
\draw[dashed] (b\x) to node{}(a\x);
}

\foreach \x in {4,8} {
\draw[red,thick] (b\x) to node{}(a\x);
}

\end{tikzpicture}
\caption{4D $(2,3)$-Cordial Oriented Hypercube constructed from cubes $A$ and $B$}
\label{q4_construction}
\end{figure}

\begin{definition}
Let $D_1$ and $D_2$ be directed graphs with same sized vertex sets and friendly vertex labelings $f_1$ and $f_2$ respectively. Let $\beta \colon V(D_1) \to V(D_2)$ be a bijection on the vertex sets of $D_1$ and $D_2$ respectively. Then, let $h \colon V(D_1) \to \{0,1\}$ such that $h(v_1) = |f_1(v) - f_2(\beta(v))|$ for all $v \in V(D_1)$. Then define $\Phi_\beta(D_1,D_2) = |h^{-1}(0)|$. In contexts where the bijection $\beta$ is clear, we write $\Phi(D_1,D_2)$.
\end{definition}

\begin{remark}
In the context of the previous definition, given arcs are drawn between vertices of digraphs $D_1$ and $D_2$ according to the bijection $\beta$, $\Phi(D_1,D_2)$ is simply the count of arcs shared by $D_1$ and $D_2$ that receive induced label $0$ by $g$. In the following example, we work within such a context, and therefore, interpret $\Phi(D_1,D_2)$ this way.
\label{remark_phi}
\end{remark}

\begin{example}[Dimension $7$]
We have introduced $3$ 3D oriented hypercubes in Figures \ref{exist_d3}, \ref{cordialA}, and \ref{cordialB} each with a $(2,3)$-cordial labeling. Let us denote the labeled oriented cube in Figure \ref{exist_d3} as $C$. For this example, we adopt the convention that $A$, $B$, and $C$ refer to labeled digraphs rather than the underlying unlabeled digraphs. We seek to construct a $(2,3)$-cordial 7D oriented hypercube from these three cubes, $A$, $B$, and $C$. As given in Figure \ref{q4_construction}, cubes $A$ and $B$ can be combined to form a 4D oriented cube such that only $2$ of the arcs they share receive label $0$. That is by the bijection between $V(A)$ and $V(B)$ defined by the edges drawn in Figure \ref{q4_construction}, $\Phi(A,B) = 2$. In Figure \ref{q7_part1}, we construct $2$ individual 4D oriented cubes, $1$ from cubes $A$ and $C$, and $1$ from cubes $B$ and $C$. As in Figure \ref{q4_construction}, arcs drawn between distinct cubes define bijections between distinct vertex sets. With respect to these bijections, in Figure \ref{q7_part1}, we see $\Phi(A,C) = \Phi(B,C) = 4$.

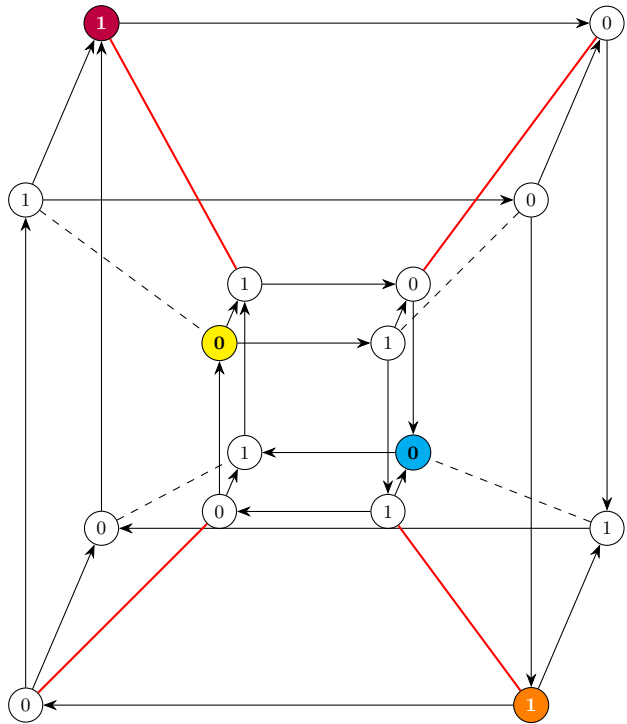
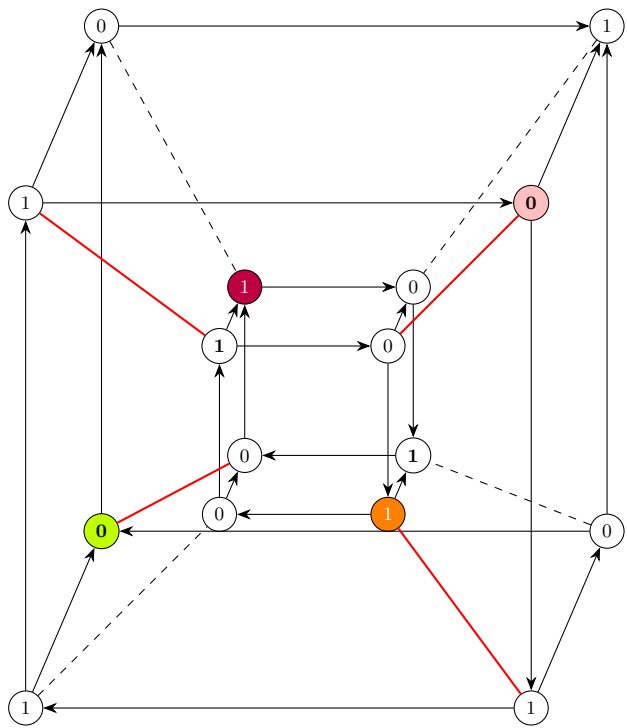
\begin{figure}
\centering
\begin{subfigure}[b]{\textwidth}
\centering
\begin{tikzpicture}[scale = 1.12]
\tikzset{vertex/.style = {shape=circle, draw, fill = white,scale = 0.8}}
\tikzset{vertexyellow/.style = {shape=circle, draw,fill = yellow, text = black,scale = 0.8}}
\tikzset{vertexcyan/.style = {shape=circle, draw,fill = cyan, text = black,scale = 0.8}}
\tikzset{vertexpurple/.style = {shape=circle, draw,fill = purple, text = white,scale = 0.8}}
\tikzset{vertexorange/.style = {shape=circle, draw,fill = orange, text = white,scale = 0.8}}
\tikzset{edge/.style = {->,> = latex, arrows={-Stealth[scale=1.1]}}}

\newcommand{\mya}{1} 
\newcommand{\ax}{0.3*\mya}
\newcommand{\ay}{0.7*\mya}

\newcommand{\myb}{3} 
\newcommand{\bx}{0.3*\myb}
\newcommand{\by}{0.7*\myb}

\newcommand{\originx}{-0.3*\mya}
\newcommand{\originy}{-0.3*\mya}

\node[vertexyellow] (a1) at (-\mya,\mya) {$\bf 0$};
\node[vertex] (a2) at (\mya,\mya) {$1$};
\node[vertex] (a3) at (-\mya,-\mya) {$0$};
\node[vertex] (a4) at (\mya,-\mya) {$1$};
\node[vertex] (a5) at (-\mya + \ax,\mya + \ay) {$1$};
\node[vertex] (a6) at (\mya + \ax,\mya + \ay) {$0$};
\node[vertex] (a7) at (-\mya + \ax,-\mya + \ay) {$1$};
\node[vertexcyan] (a8) at (\mya + \ax,-\mya + \ay) {$\bf 0$};

\node[vertex] (b1) at (-\myb + \originx,\myb + \originy) {$1$};
\node[vertex] (b2) at (\myb + \originx,\myb + \originy) {$0$};
\node[vertex] (b3) at (-\myb + \originx,-\myb + \originy) {$0$};
\node[vertexorange] (b4) at (\myb + \originx,-\myb + \originy) {$\bf 1$};
\node[vertexpurple] (b5) at (-\myb + \bx + \originx,\myb + \by + \originy) {$\bf 1$};
\node[vertex] (b6) at (\myb + \bx + \originx,\myb + \by + \originy) {$0$};
\node[vertex] (b7) at (-\myb + \bx + \originx,-\myb + \by + \originy) {$0$};
\node[vertex] (b8) at (\myb + \bx + \originx,-\myb + \by + \originy) {$1$};

\draw[edge] (a1) to (a2);
\draw[edge] (a4) to (a3);
\draw[edge] (a3) to (a1);
\draw[edge] (a2) to (a4);
\draw[edge] (a5) to (a6);
\draw[edge] (a6) to (a8);
\draw[edge] (a8) to (a7);
\draw[edge] (a7) to (a5);
\draw[edge] (a1) to (a5);
\draw[edge] (a2) to (a6);
\draw[edge] (a3) to (a7);
\draw[edge] (a4) to (a8);

\draw[edge] (b1) to (b2);
\draw[edge] (b4) to (b3);
\draw[edge] (b3) to (b1);
\draw[edge] (b2) to (b4);
\draw[edge] (b5) to (b6);
\draw[edge] (b6) to (b8);
\draw[edge] (b8) to (b7);
\draw[edge] (b7) to (b5);
\draw[edge] (b1) to (b5);
\draw[edge] (b2) to (b6);
\draw[edge] (b3) to (b7);
\draw[edge] (b4) to (b8);

\foreach \x in {1,2,7,8} {
\draw[dashed] (b\x) to node{}(a\x);
}

\foreach \x in {3,4,5,6} {
\draw[red,thick] (b\x) to node{}(a\x);
}

\end{tikzpicture}
\caption{$A$ (inner) and $C$ (outer)}
\label{q7_part1a}
\end{subfigure}

\vspace{2em}

\begin{subfigure}[b]{\textwidth}
\centering
\begin{tikzpicture}[scale = 1.12]
\tikzset{vertex/.style = {shape=circle, draw, fill = white,scale = 0.8}}
\tikzset{vertexpurple/.style = {shape=circle, draw,fill = purple, text = white,scale = 0.8}}
\tikzset{vertexorange/.style = {shape=circle, draw,fill = orange, text = white,scale = 0.8}}
\tikzset{vertexpink/.style = {shape=circle, draw, fill = pink, text = black,scale = 0.8}}
\tikzset{vertexlime/.style = {shape=circle, draw, fill = lime,text = black,scale = 0.8}}
\tikzset{edge/.style = {->,> = latex, arrows={-Stealth[scale=1.1]}}}

\newcommand{\mya}{1} 
\newcommand{\ax}{0.3*\mya}
\newcommand{\ay}{0.7*\mya}

\newcommand{\myb}{3} 
\newcommand{\bx}{0.3*\myb}
\newcommand{\by}{0.7*\myb}

\newcommand{\originx}{-0.3*\mya}
\newcommand{\originy}{-0.3*\mya}

\node[vertex] (a1) at (-\mya,\mya) {$\bf 1$};
\node[vertex] (a2) at (\mya,\mya) {$0$};
\node[vertex] (a3) at (-\mya,-\mya) {$0$};
\node[vertexorange] (a4) at (\mya,-\mya) {$1$};
\node[vertexpurple] (a5) at (-\mya + \ax,\mya + \ay) {$1$};
\node[vertex] (a6) at (\mya + \ax,\mya + \ay) {$0$};
\node[vertex] (a7) at (-\mya + \ax,-\mya + \ay) {$0$};
\node[vertex] (a8) at (\mya + \ax,-\mya + \ay) {$\bf 1$};

\node[vertex] (b1) at (-\myb + \originx,\myb + \originy) {$1$};
\node[vertexpink] (b2) at (\myb + \originx,\myb + \originy) {$\bf 0$};
\node[vertex] (b3) at (-\myb + \originx,-\myb + \originy) {$1$};
\node[vertex] (b4) at (\myb + \originx,-\myb + \originy) {$1$};
\node[vertex] (b5) at (-\myb + \bx + \originx,\myb + \by + \originy) {$0$};
\node[vertex] (b6) at (\myb + \bx + \originx,\myb + \by + \originy) {$1$};
\node[vertexlime] (b7) at (-\myb + \bx + \originx,-\myb + \by + \originy) {$\bf 0$};
\node[vertex] (b8) at (\myb + \bx + \originx,-\myb + \by + \originy) {$0$};

\draw[edge] (a1) to (a2);
\draw[edge] (a4) to (a3);
\draw[edge] (a3) to (a1);
\draw[edge] (a2) to (a4);
\draw[edge] (a5) to (a6);
\draw[edge] (a6) to (a8);
\draw[edge] (a8) to (a7);
\draw[edge] (a7) to (a5);
\draw[edge] (a1) to (a5);
\draw[edge] (a2) to (a6);
\draw[edge] (a3) to (a7);
\draw[edge] (a4) to (a8);

\draw[edge] (b1) to (b2);
\draw[edge] (b4) to (b3);
\draw[edge] (b3) to (b1);
\draw[edge] (b2) to (b4);
\draw[edge] (b5) to (b6);
\draw[edge] (b8) to (b6);
\draw[edge] (b8) to (b7);
\draw[edge] (b7) to (b5);
\draw[edge] (b1) to (b5);
\draw[edge] (b2) to (b6);
\draw[edge] (b3) to (b7);
\draw[edge] (b4) to (b8);

\foreach \x in {3,5,6,8} {
\draw[dashed] (b\x) to node{}(a\x);
}

\foreach \x in {1,2,4,7} {
\draw[red,thick] (b\x) to node{}(a\x);
}

\end{tikzpicture}
\caption{B (outer) and C (inner)}
\label{q7_part1b}
\end{subfigure}
\caption{4D oriented cubes constructed from cubes $A,B,C$}
\label{q7_part1}
\end{figure}

Now, for $D \in \{A,B,C\}$, given $f$ is the friendly vertex labeling of $D$ and $g$ is the induced arc labeling of $D$, define $\overline{D}$ to be the underlying digraph $D$ labeled instead by $\overline{f}$ and $\overline{g}$. Recall, such a labeling is $(2,3)$-cordial by Lemma 1. Then, for all $D_1,D_2 \in \{A,B,C\}$, $\Phi(\overline{D_1},\overline{D_2}) = \Phi(D_1,D_2)$ and $\Phi(\overline{D_1},D_2) = \Phi(D_1,\overline{D_2}) = 8 - \Phi(D_1,D_2)$. Then, for all $D_1 \neq D_2$, taking $\Phi(D_1,\overline{D_2})$ to be with respect to the appropriate bijection between $V(D_1)$ and $V(D_2)$ defined in either Figure \ref{q4_construction}, \ref{q7_part1a}, or \ref{q7_part1b}, we have $\Phi(\overline{A},B) = 6$ and $\Phi(\overline{B},C) = \Phi(\overline{A},C)= 4$. Lastly, note we can construct a 4D oriented hypercube between $2$ identical cubes $D$ by drawing arcs between like vertices. According to such a bijection, $\Phi(D,D) = 8$. Now, define $\gamma = \{\overline{A},A,\overline{B},B,\overline{C},C\}$. Then for all $Q_1,Q_2\in \gamma$, $\Phi(Q_1,Q_2)$ with reference to the appropriate aforementioned bijections between $V(Q_1)$ and $V(Q_2)$ are given below in Table \ref{q7_table}. As $\Phi$ is commutative by definition, the lower diagonal of Table \ref{q7_table} is left empty.

\begin{table}
\setlength{\tabcolsep}{0.5em}
\begin{center}
{\renewcommand{\arraystretch}{1.3}
\begin{tabular}{ c| c c c c c c }
  $\Phi$& $\overline{A}$ & $A$ & $\overline{B}$ & $B$ & $\overline{C}$ & $C$\\
  \hline
 $\overline{A}$ & 8 & 0 & 2 & 6 & 4 & 4 \\  
 $A$ & &8&6&2&4&4 \\
 $\overline{B}$&&&8&0&4&4\\
 $B$&&&&8&4&4\\
 $\overline{C}$&&&&&8&0\\
 $C$&&&&&&8
\end{tabular}}
\end{center}
\caption{$\Phi(Q_1,Q_2)$ for all $Q_1,Q_2 \in \gamma$}
\label{q7_table}
\end{table}

Now, we construct a $(2,3)$-cordial 7D oriented hypercube by drawing edges between cubes in the set $\gamma$ according to the previously defined vertex set bijections. Given in Figure \ref{q6from3D} are $2$ $6D$ oriented hypercubes constructed from cubes in $\gamma$ where for all $Q_1,Q_2 \in \gamma$, an edge between cube $Q_1$ and $Q_2$ signifies $8$ edges between cubes $Q_1$ and $Q_2$ drawn according to the appropriate bijection between $V(Q_1)$ and $V(Q_2)$. Note, in Figure \ref{q6from3D}, an edge between $Q_1$ and $Q_2$ is labeled $\Phi(Q_1,Q_2)$. Observe for each cube in Figure \ref{q6from3D}, the edge label sum is equal to $32$. By Remark \ref{remark_phi}, this is to say a total of $32$ edges shared by distinct cubes in $\gamma$ receive induced label $0$ by $g$ regardless of orientation. As each 6D cube in Figure \ref{q6from3D} has a total of $12 \cdot 8 = 96$ edges drawn between cubes in $\gamma$, and $32 = 96/3$, the remaining edges not labeled $0$ in each 6D cube can be oriented such that half are labeled $1$ and half are labeled $-1$ making each 6D cube $(2,3)$-cordial. Now, in Figure \ref{q7_part3} a 7D cube is constructed from these $(2,3)$-cordial 6D oriented cubes. In Figure \ref{q7_part3} as in Figure \ref{q6from3D}, an edge between cube $Q_1$ and $Q_2$ signifies $8$ edges between cubes $Q_1$ and $Q_2$ drawn according to the appropriate bijection between $V(Q_1)$ and $V(Q_2)$, and each edge between $Q_1$ and $Q_2$ is labeled $\Phi(Q_1,Q_2)$. 

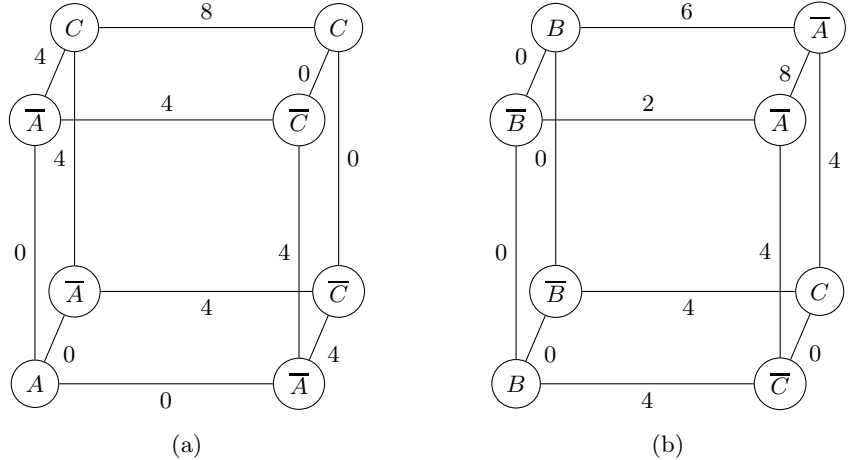
\begin{figure}
\centering
\begin{subfigure}[b]{0.4\textwidth}
\centering
\begin{tikzpicture}[scale = 1.35]
\tikzset{vertex/.style = {shape=circle, draw, fill = white,scale = 1}}
\tikzset{vertexyellow/.style = {shape=circle, draw,fill = yellow, text = black,scale = 1}}
\tikzset{vertexcyan/.style = {shape=circle, draw,fill = cyan, text = black,scale = 1}}
\tikzset{edge/.style = {->,> = latex, arrows={-Stealth[scale=1.4]}}}

\newcommand{\mya}{1.3} 
\newcommand{\ax}{0.3*\mya}
\newcommand{\ay}{0.7*\mya}

\newcommand{\originx}{-0.4*\mya}
\newcommand{\originy}{-0.30*\mya}

\node[vertex] (a1) at (-\mya,\mya) {$\overline{A}$};
\node[vertex] (a2) at (\mya,\mya) {$\overline{C}$};
\node[vertex] (a3) at (-\mya,-\mya) {$A$};
\node[vertex] (a4) at (\mya,-\mya) {$\overline{A}$};
\node[vertex] (a5) at (-\mya + \ax,\mya + \ay) {$C$};
\node[vertex] (a6) at (\mya + \ax,\mya + \ay) {$C$};
\node[vertex] (a7) at (-\mya + \ax,-\mya + \ay) {$\overline{A}$};
\node[vertex] (a8) at (\mya + \ax,-\mya + \ay) {$ \overline{C}$};

\draw[] (a1) to node[above]{$4$}(a2);
\draw[] (a4) to node[below]{$0$}(a3);
\draw[] (a3) to node[left]{$0$}(a1);
\draw[] (a2) to node[left]{$4$}(a4);
\draw[] (a5) to node[above]{$8$}(a6);
\draw[] (a6) to node[right]{$0$}(a8);
\draw[] (a8) to node[below]{$4$}(a7);
\draw[] (a7) to node[left]{$4$}(a5);
\draw[] (a1) to node[above left]{$4$}(a5);
\draw[] (a2) to node[left]{$0$}(a6);
\draw[] (a3) to node[below right]{$0$}(a7);
\draw[] (a4) to node[below right]{$4$}(a8);
\end{tikzpicture}
\caption{}
\end{subfigure}
\hspace{4em}
\begin{subfigure}[b]{0.4\textwidth}
\centering
\begin{tikzpicture}[scale = 1.35]
\tikzset{vertex/.style = {shape=circle, draw, fill = white,scale = 1}}
\tikzset{vertexpink/.style = {shape=circle, draw, fill = pink, text = black,scale = 1}}
\tikzset{vertexlime/.style = {shape=circle, draw, fill = lime,text = black,scale = 1}}
\tikzset{edge/.style = {->,> = latex, arrows={-Stealth[scale=1.4]}}}

\newcommand{\mya}{1.3} 
\newcommand{\ax}{0.3*\mya}
\newcommand{\ay}{0.7*\mya}

\node[vertex] (b1) at (-\mya,\mya) {$\overline{B}$};
\node[vertex] (b2) at (\mya,\mya) {$\overline{A}$};
\node[vertex] (b3) at (-\mya,-\mya) {$B$};
\node[vertex] (b4) at (\mya,-\mya) {$\overline{C}$};
\node[vertex] (b5) at (-\mya + \ax,\mya + \ay) {$B$};
\node[vertex] (b6) at (\mya + \ax,\mya + \ay) {$\overline{A}$};
\node[vertex] (b7) at (-\mya + \ax,-\mya + \ay) {$\overline{B}$};
\node[vertex] (b8) at (\mya + \ax,-\mya + \ay) {$C$};

\draw[] (b1) to node[above]{$2$}(b2);
\draw[] (b4) to node[below]{$4$}(b3);
\draw[] (b3) to node[left]{$0$}(b1);
\draw[] (b2) to node[left]{$4$}(b4);
\draw[] (b5) to node[above]{$6$}(b6);
\draw[] (b8) to node[right]{$4$}(b6);
\draw[] (b8) to node[below]{$4$}(b7);
\draw[] (b7) to node[left]{$0$}(b5);
\draw[] (b1) to node[above left]{$0$}(b5);
\draw[] (b2) to node[left]{$8$}(b6);
\draw[] (b3) to node[below right]{$0$}(b7);
\draw[] (b4) to node[below right]{$0$}(b8);
\end{tikzpicture}
\caption{}
\end{subfigure}
\caption{$(2,3)$-cordial 6D oriented hypercubes}
\label{q6from3D}
\end{figure}

In Figure \ref{q7_part3}, the edge label sum is $22$. Similar to before, by Remark \ref{remark_phi}, this is to say a total of $22$ of the $64$ edges drawn between vertices of the inner 6D cube and the outer 6D cube receive label $0$. The remaining $42$ edges can be oriented such that $21$ receive a label of $1$ and $21$ receive a label of $-1$ by $g$. Because each 6D cube is $(2,3)$-cordial, such a choice yields a $(2,3)$-cordial 7D oriented hypercube.
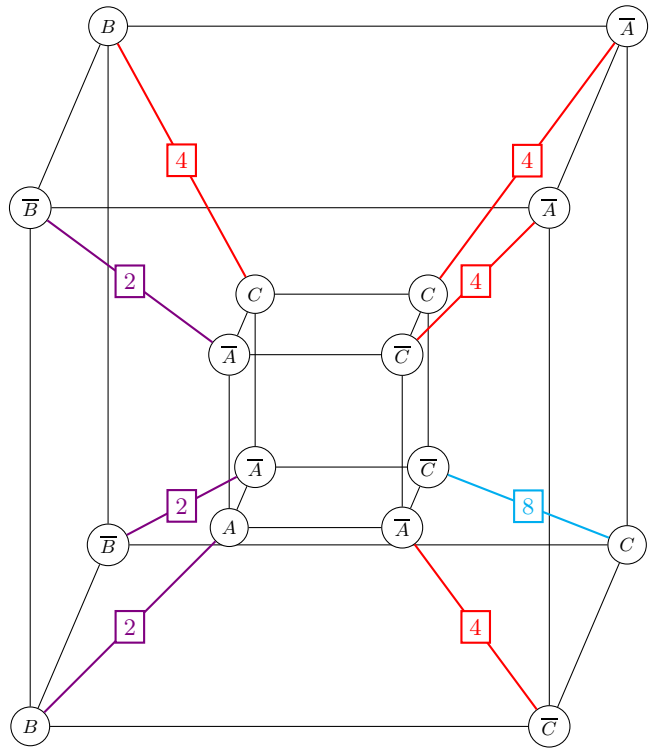
\begin{figure}
\centering
\begin{tikzpicture}[scale = 1.15]
\tikzset{vertex/.style = {shape=circle, draw, fill = white,scale = 0.8}}
\tikzset{vertexyellow/.style = {shape=circle, draw,fill = yellow, text = black,scale = 0.8}}
\tikzset{vertexcyan/.style = {shape=circle, draw,fill = cyan, text = black,scale = 0.8}}
\tikzset{vertexpink/.style = {shape=circle, draw, fill = pink, text = black,scale = 0.8}}
\tikzset{vertexlime/.style = {shape=circle, draw, fill = lime,text = black,scale = 0.8}}
\tikzset{edge/.style = {->,> = latex, arrows={-Stealth[scale=1.1]}}}

\newcommand{\mya}{1} 
\newcommand{\ax}{0.3*\mya}
\newcommand{\ay}{0.7*\mya}

\newcommand{\myb}{3} 
\newcommand{\bx}{0.3*\myb}
\newcommand{\by}{0.7*\myb}

\newcommand{\originx}{-0.3*\mya}
\newcommand{\originy}{-0.3*\mya}

\node[vertex] (b1) at (-\myb + \originx,\myb + \originy) {$\overline{B}$};
\node[vertex] (b2) at (\myb + \originx,\myb + \originy) {$\overline{A}$};
\node[vertex] (b3) at (-\myb + \originx,-\myb + \originy) {$B$};
\node[vertex] (b4) at (\myb + \originx,-\myb + \originy) {$\overline{C}$};
\node[vertex] (b5) at (-\myb + \bx + \originx,\myb + \by + \originy) {$B$};
\node[vertex] (b6) at (\myb + \bx + \originx,\myb + \by + \originy) {$\overline{A}$};
\node[vertex] (b7) at (-\myb + \bx + \originx,-\myb + \by + \originy) {$\overline{B}$};
\node[vertex] (b8) at (\myb + \bx + \originx,-\myb + \by + \originy) {$C$};

\draw[] (b1) to (b2);
\draw[] (b4) to (b3);
\draw[] (b3) to (b1);
\draw[] (b2) to (b4);
\draw[] (b5) to (b6);
\draw[] (b8) to (b6);
\draw[] (b8) to (b7);
\draw[] (b7) to (b5);
\draw[] (b1) to (b5);
\draw[] (b2) to (b6);
\draw[] (b3) to (b7);
\draw[] (b4) to (b8);

\node[vertex] (a1) at (-\mya,\mya) {$\overline{A}$};
\node[vertex] (a2) at (\mya,\mya) {$\overline{C}$};
\node[vertex] (a3) at (-\mya,-\mya) {$A$};
\node[vertex] (a4) at (\mya,-\mya) {$\overline{A}$};
\node[vertex] (a5) at (-\mya + \ax,\mya + \ay) {$C$};
\node[vertex] (a6) at (\mya + \ax,\mya + \ay) {$C$};
\node[vertex] (a7) at (-\mya + \ax,-\mya + \ay) {$\overline{A}$};
\node[vertex] (a8) at (\mya + \ax,-\mya + \ay) {$\overline{C}$};

\draw[] (a1) to (a2);
\draw[] (a4) to (a3);
\draw[] (a3) to (a1);
\draw[] (a2) to (a4);
\draw[] (a5) to (a6);
\draw[] (a6) to (a8);
\draw[] (a8) to (a7);
\draw[] (a7) to (a5);
\draw[] (a1) to (a5);
\draw[] (a2) to (a6);
\draw[] (a3) to (a7);
\draw[] (a4) to (a8);

\foreach \x in {1,3,7} {
\draw[violet,thick] (b\x) to node[draw,fill = white]{$2$}(a\x);
}

\foreach \x in {2,4,5,6} {
\draw[red,thick] (b\x) to node[draw,fill = white]{$4$}(a\x);
}

\foreach \x in {8} {
\draw[cyan,thick] (b\x) to node[draw,fill = white]{$8$}(a\x);
}

\end{tikzpicture}
\caption{7D $(2,3)$-cordial oriented hypercube constructed from $\gamma$ cubes}
\label{q7_part3}
\end{figure}
\end{example}
In the previous two examples we have confirmed there exist $(2,3)$-cordial oriented hypercubes of dimension $3k + 1$ for $k = 1,2$. We now state the following conjecture.
\begin{conjecture}
Let $n$ be a multiple of $3$, then there exists an $(n + 1)$-dimensional oriented hypercube $C_{n+1}$ that is (2,3)-cordial.
\end{conjecture}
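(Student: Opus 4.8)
The plan is to get the $(n+1)$-dimensional cube by a \emph{single} doubling rather than by the triple doubling used in the preceding theorem. Realize the $(n+1)$-cube as two disjoint copies of an $n$-cube joined by the perfect matching $M$ along the new coordinate direction. By the preceding theorem there is a $(2,3)$-cordial oriented $n$-cube; orient and label each of the two copies as such a cube, where by Lemma \ref{lambda_lemma} either copy's labelling may be complemented and its orientation reversed independently, all remaining $(2,3)$-cordial. Since $3\mid|A_n|$, each copy then contributes exactly $\bigl(\tfrac13|A_n|,\tfrac13|A_n|,\tfrac13|A_n|\bigr)$ to the arc census, and the combined vertex labelling of the $(n+1)$-cube is friendly. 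A matching edge $\{x,x'\}$ (with $x$ in the first copy matched to $x'$ in the second) is forced by the rule $g$ of Definition \ref{def_induced_label} to receive label $0$ exactly when $f(x)=f'(x')$, and may otherwise be oriented to receive $1$ or $-1$ at will. Hence the $(n+1)$-cube is $(2,3)$-cordial once the labellings can be arranged so that $\Phi:=|\{x:f(x)=f'(x')\}|\in\{\lfloor 2^{n}/3\rfloor,\lceil 2^{n}/3\rceil\}$: those $\Phi$ edges supply the $0$-arcs, the remaining $2^{n}-\Phi$ are split as evenly as possible between $1$ and $-1$, and a short residue check shows the three global totals then differ pairwise by at most $1$. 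Thus the conjecture reduces to the following claim: for every multiple of $3$, call it $n$, there exist two $(2,3)$-cordial oriented $n$-cubes on a common underlying cube whose vertex labellings agree on $\lfloor 2^{n}/3\rfloor$ or $\lceil 2^{n}/3\rceil$ vertices.

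\textbf{Proving the reduced claim.}
This claim is what the existing examples verify for $n=3$ and $n=6$: for $n=3$ the witnessing pair is the cubes $A$ (Figure \ref{cordialA}) and $B$ (Figure \ref{cordialB}), for which the identification of Figure \ref{q4_construction} gives $\Phi(A,B)=2=\lfloor 2^3/3\rfloor$ (Table \ref{q7_table}); for $n=6$ it is the two $6$-dimensional cubes assembled from the $3$-cube toolkit in Figure \ref{q6from3D}. I would prove the claim by induction on $n$ through multiples of $3$, using that toolkit --- the labelled $3$-cube $C$ of Figure \ref{exist_d3}, $A$, $B$, and their complements --- together with Table \ref{q7_table}. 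An $n$-cube is $2^{n-3}$ copies of a $3$-cube placed at the vertices of an $(n-3)$-cube, each structure edge standing for the $8$ parallel arcs between the two $3$-cubes it joins; if every copy is a toolkit cube it is internally $(4,4,4)$-balanced, so by the count in Remark \ref{remark_phi} the $n$-cube is $(2,3)$-cordial exactly when the assignment $\tau$ of toolkit cubes to structure vertices makes $\sum_{\{u,v\}}\Phi(\tau(u),\tau(v))$ equal to $\tfrac13$ of the inter-copy arc count, while the reduced claim additionally asks for two such assignments $\tau,\tau'$ with $\sum_{v}\Phi(\tau(v),\tau'(v))$ within $1$ of $\tfrac13\cdot 2^{n}$. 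Since $\Phi(\overline X,\overline Y)=\Phi(X,Y)$ and $\Phi(X,\overline Y)=8-\Phi(X,Y)$, complementing the cube at a chosen set of structure vertices toggles each affected edge's contribution between $\Phi$ and $8-\Phi$; applied to a pair furnished by the inductive hypothesis on the $(n-3)$-cube, this is the device I would use to correct the two sums to the required residues.

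\textbf{The main obstacle.}
The hard part is exactly this residue bookkeeping. A direct computation shows the naive recursion --- inflate each $(n-3)$-cube to an $n$-cube by the recipe of the preceding theorem --- multiplies the cross-agreement count by $8$, which misses the target $\{\lfloor 2^{n}/3\rfloor,\lceil 2^{n}/3\rceil\}$ by $2$ or $3$; so at every stage one must flip a bounded number of copies to their complements, keeping both assignments $(2,3)$-cordial \emph{and} landing the cross-agreement on the nose, simultaneously for all $n$. It is not clear that the six-cube toolkit, whose $\Phi$-values are confined to $\{0,2,4,6,8\}$, affords enough flexibility for this; a full proof may have to enlarge it --- by classifying further $(2,3)$-cordial orientations of $Q_3$, or by admitting the $(2,3)$-cordial $4$- and $6$-dimensional cubes already constructed as additional building blocks --- and to fold all the corrections into a single uniform recursive rule. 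Supplying such a toolkit together with that rule is the missing ingredient, and is why the statement is recorded here only as a conjecture.
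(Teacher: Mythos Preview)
The statement you are addressing is a \emph{conjecture} in the paper, not a theorem: the paper provides no proof of it, only the supporting examples in dimensions $4$ and $7$. Your write-up is therefore not competing against a proof in the paper but rather attempting to go beyond it, and you yourself recognise this in your final paragraph.

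Your reduction is sound and matches the paper's own heuristic exactly: the dimension-$4$ example (Figure~\ref{q4_construction}) and the dimension-$7$ example (Figures~\ref{q6from3D} and~\ref{q7_part3}) are precisely instances of your ``single doubling'' scheme, with $\Phi$ landing in the required window. The toolkit and the $\Phi$-arithmetic you invoke are drawn straight from the paper's Table~\ref{q7_table} and Remark~\ref{remark_phi}, so the framework is faithful.

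The gap you identify is genuine and is the reason the paper leaves this as a conjecture. Your inductive step requires, at every stage, a simultaneous adjustment of two toolkit assignments so that (i) each remains $(2,3)$-cordial on its structure edges and (ii) their cross-agreement hits $\lfloor 2^n/3\rfloor$ or $\lceil 2^n/3\rceil$; the complementation moves you describe do toggle individual $\Phi$-contributions between $\Phi$ and $8-\Phi$, but you have not shown that a bounded sequence of such flips can always repair the $\pm 2$ or $\pm 3$ defect introduced by the naive recursion without destroying condition~(i). Since the available $\Phi$-values in Table~\ref{q7_table} are all even, hitting an odd target such as $\lceil 2^n/3\rceil$ for $n\equiv 0\pmod 2$ already looks problematic with this toolkit alone. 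Until that bookkeeping is carried out --- or the toolkit enlarged as you suggest --- the argument remains a plausible strategy rather than a proof, which is consistent with the paper's own assessment.
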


\section{Non-$(2,3)$-Cordial Oriented Cubes.}
In the previous section, we demonstrated the existence of $(2,3)$-cordial oriented hypercubes of varying dimension including dimension $3$. Now, we demonstrate the existence of oriented cubes that are not $(2,3)$-cordial, that is, we demonstrate there exist oriented cubes that do not admit $(2,3)$-cordial labelings.

\begin{figure}
\centering
\begin{tikzpicture}[scale = 1.35]
\tikzset{vertex/.style = {shape=circle, draw, fill = white,scale = 1}}
\tikzset{vertexred/.style = {shape=circle, draw,fill = pink, text = black,scale = 1}}
\tikzset{vertexblue/.style = {shape=circle, draw,fill = cyan, text = black,scale = 1}}
\tikzset{edge/.style = {->,> = latex, arrows={-Stealth[scale=1.4]}}}

\newcommand{\mya}{1.3} 
\newcommand{\ax}{0.3*\mya}
\newcommand{\ay}{0.7*\mya}

\newcommand{\originx}{-0.4*\mya}
\newcommand{\originy}{-0.30*\mya}

\node[vertex] (a1) at (-\mya,\mya) {$s_1$};
\node[vertexred] (a2) at (\mya,\mya) {$b_3$};
\node[vertex] (a3) at (-\mya,-\mya) {$s_3$};
\node[vertexblue] (a4) at (\mya,-\mya) {$v_1$};
\node[vertexred] (a5) at (-\mya + \ax,\mya + \ay) {$b_1$};
\node[vertexblue] (a6) at (\mya + \ax,\mya + \ay) {$v_2$};
\node[vertex] (a7) at (-\mya + \ax,-\mya + \ay) {$s_2$};
\node[vertexred] (a8) at (\mya + \ax,-\mya + \ay) {$b_2$};

\draw[edge] (a2) to node[above]{}(a1);
\draw[edge] (a3) to node[below]{}(a4);
\draw[edge] (a1) to node[left]{}(a3);
\draw[edge] (a2) to node[left]{}(a4);
\draw[edge] (a5) to node[above]{}(a6);
\draw[edge] (a8) to node[right]{}(a6);
\draw[edge] (a8) to node[below]{}(a7);
\draw[edge] (a5) to node[left]{}(a7);
\draw[edge] (a5) to node[above left]{}(a1);
\draw[edge] (a2) to node[left]{}(a6);
\draw[edge] (a7) to node[below right]{}(a3);
\draw[edge] (a8) to node[below right]{}(a4);
\end{tikzpicture}
\caption{Oriented cube $V$, $3$ vertices of out-degree $3$ (labeled $b_i$), and $2$ vertices of in-degree $3$ (labeled $v_i$)}
\label{non_cord_1}
\end{figure}
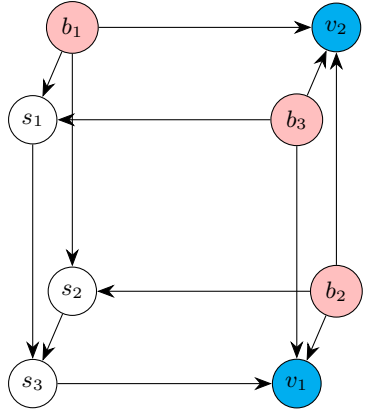
\begin{theorem}
The oriented cube $V$ in Figure \ref{non_cord_1} is not $(2,3)$-cordial.
\label{non_cord}
\end{theorem}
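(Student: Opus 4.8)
The plan is to exploit the rigid degree structure of $V$: it has three vertices $b_1,b_2,b_3$ of out-degree $3$, two vertices $v_1,v_2$ of in-degree $3$, and three vertices $s_1,s_2,s_3$ each of in-degree $2$ and out-degree $1$. I will show that, combined with a global counting identity, this forces any candidate $(2,3)$-cordial labeling into one of only two configurations, and then rule both out by a short finite check.

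The first step is the identity: for any vertex labeling $f$ of a digraph $D=(V,A)$ with induced arc labeling $g$,
\[
\sum_{\vec{uv}\in A} g(\vec{uv}) \;=\; \sum_{\vec{uv}\in A}\bigl(f(v)-f(u)\bigr) \;=\; \sum_{w\in V} f(w)\,\bigl(d^-(w)-d^+(w)\bigr),
\]
where $d^-(w)$ and $d^+(w)$ are the in- and out-degree of $w$. Since $V$ has $12$ arcs, a $(2,3)$-cordial labeling must have $|g^{-1}(1)|=|g^{-1}(-1)|=|g^{-1}(0)|=4$, so the left-hand side is $0$. For $V$, the quantity $d^-(w)-d^+(w)$ equals $-3$ on each $b_i$, $+3$ on each $v_j$, and $+1$ on each $s_k$; writing $X=\sum_i f(b_i)$, $Y=\sum_j f(v_j)$, $Z=\sum_k f(s_k)$, the identity becomes $Z=3(X-Y)$, while friendliness of $f$ (exactly four $1$'s among the eight vertices) gives $X+Y+Z=4$. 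Over the admissible ranges the only solutions are $(X,Y,Z)=(2,2,0)$ and $(X,Y,Z)=(1,0,3)$, and these two profiles are interchanged by replacing $f$ with $\overline f$. By Lemma \ref{lambda_lemma}(2), $f$ is $(2,3)$-cordial precisely when $\overline f$ is, so it suffices to rule out the first profile: $s_1,s_2,s_3$ all labeled $0$, $v_1,v_2$ both labeled $1$, and exactly two of $b_1,b_2,b_3$ labeled $1$.

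The second step is to compute the induced arc labels under this profile directly from Figure \ref{non_cord_1}. Three arcs — $s_1\to s_3$, $s_2\to s_3$, and $s_3\to v_1$ — have labels pinned to $0,0,1$ independently of the $b_i$. The remaining nine arcs all leave sources, and the label of such an arc depends only on its source's label: it is $1-f(b_i)$ on an arc from $b_i$ to some $v_j$ and $-f(b_i)$ on an arc from $b_i$ to some $s_k$. Splitting into the three cases according to which single $b_i$ is labeled $0$ and tallying, the triple $\bigl(|g^{-1}(1)|,|g^{-1}(-1)|,|g^{-1}(0)|\bigr)$ comes out as $(2,2,8)$ in one case and $(3,3,6)$ in the other two — never $(4,4,4)$. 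Hence no labeling with this profile is $(2,3)$-cordial, and by the complementation symmetry neither is any labeling with the other profile, which proves the theorem.

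The argument is conceptually light; the only real care needed is in the second step's bookkeeping — reading the orientations of all twelve arcs of $V$ correctly and keeping the source/sink roles straight — since everything hinges on the exact values $d^-(w)-d^+(w)$ and on which arcs leave which $b_i$. The "hard part", such as it is, is the realization in the first step that the counting identity together with friendliness pins the labeling down so tightly that only a three-case check remains; after that there is no genuine obstacle.
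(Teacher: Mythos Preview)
Your proof is correct and takes a genuinely different route from the paper's. The paper proceeds by brute-force case analysis: it splits on whether $f(b_1)=f(v_1)$ or not, then within each branch argues that certain distributions of labels among the neighborhoods of $b_1$ and $v_1$ are forced, leading to roughly eighteen terminal sub-cases (Cases 1.1--1.3 and 2.1--2.3, each with three sub-sub-cases) in which it locates an arc-label class of size at least $5$. Your argument instead exploits the global identity $\sum_{\vec{uv}} g(\vec{uv})=\sum_w f(w)(d^-(w)-d^+(w))$ together with friendliness to pin down the full label profile $(X,Y,Z)$ on the three vertex classes to just two possibilities, related by complementation; this collapses the verification to three easy tallies. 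What your approach buys is a dramatic reduction in casework and a conceptual explanation of \emph{why} $V$ fails --- its extreme degree imbalance forces $Z\in\{0,3\}$, which already makes the $s$-arcs too homogeneous. The paper's approach, while longer, has the minor advantage that it never needs the counting identity and could in principle be adapted to digraphs without such clean degree strata; but for this particular $V$ your method is substantially cleaner.
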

\begin{proof}
There are \(\binom{8}{4}\) possible friendly vertex labelings for the oriented cube $V$. By a brute force algorithm, it can be shown that none of these vertex labelings induces a $(2,3)$-cordial labeling.
\end{proof}

\begin{corollary}
The oriented cube $V^R$ for $V$ in Figure \ref{non_cord_1} is not $(2,3)$-cordial.
\end{corollary}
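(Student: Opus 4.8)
The plan is to obtain this as an immediate consequence of Theorem \ref{non_cord} and Lemma \ref{lambda_lemma}, with no need to repeat the case analysis. The crucial observation is that the defining condition for $(2,3)$-cordiality of an induced arc labeling, namely $-1 \le |g^{-1}(i)| - |g^{-1}(j)| \le 1$ for all $i,j \in \{0,1,-1\}$, is symmetric in the three arc-label classes; in particular it is unchanged if we interchange the number of arcs labeled $1$ with the number labeled $-1$. Hence a triple $(\alpha,\beta,\gamma) \in \mathbb{N}^3$ meets the $(2,3)$-cordial requirement if and only if $(\beta,\alpha,\gamma)$ does.

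First I would record that $V$ and $V^R$ are orientations of the (simple) $3$-cube, hence digon-free, so both lie in $\mathcal{T}_8$ and Lemma \ref{lambda_lemma} is available. Then I would argue by contradiction. Suppose $V^R$ is $(2,3)$-cordial, witnessed by a friendly vertex labeling $f$ with induced arc labeling $g$, so that $\Lambda_{f,g}(V^R) = (\alpha,\beta,\gamma)$ is a $(2,3)$-cordial triple. Applying Lemma \ref{lambda_lemma}.1 with $D = V^R$ and noting $(V^R)^R = V$ yields $\Lambda_{f,g}(V) = (\beta,\alpha,\gamma)$, which by the symmetry noted above is again a $(2,3)$-cordial triple. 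Since friendliness is a property of $f$ alone, $f$ is still a friendly vertex labeling of $V$, so this exhibits a $(2,3)$-cordial labeling of $V$, contradicting Theorem \ref{non_cord}. Therefore $V^R$ is not $(2,3)$-cordial.

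I do not anticipate any genuine obstacle: the argument is a one-line application of the reversal part of Lemma \ref{lambda_lemma}. The only points needing a moment of care are checking that the hypothesis of Lemma \ref{lambda_lemma} holds (digon-freeness, which is clear since $V^R$ is an orientation of a simple graph) and making explicit that the same labeling $f$ serves for both digraphs because "friendly" depends only on the vertex labeling. Equivalently, one could simply remark that $D \mapsto D^R$ maps the class of $(2,3)$-cordial digraphs onto itself, so the class of non-$(2,3)$-cordial digraphs is likewise closed under arc reversal, and then apply this to $V$.
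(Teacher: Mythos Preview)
Your proposal is correct and follows essentially the same route as the paper: deduce the corollary from Theorem \ref{non_cord} via Lemma \ref{lambda_lemma}.1, using that swapping $\alpha$ and $\beta$ preserves the $(2,3)$-cordial condition. You are in fact slightly more careful than the paper, which writes $\Lambda_{f,g}(V) = \Lambda_{f,g}(V^R)$ rather than the swap $(\beta,\alpha,\gamma)$; your explicit appeal to the symmetry of the cordiality condition is the right way to state it.
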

\begin{proof}
By Lemma \ref{lambda_lemma}.1, $\Lambda_{f,g}(V) = \Lambda_{f,g}(V^R)$ for any vertex-arc labeling $f,g$. Thus, given $V$ does not admit a $(2,3)$-cordial labeling by Theorem \ref{non_cord}, neither does $V^R$. Equivalently, $V^R$ is not $(2,3)$-cordial.
\end{proof}

\begin{theorem}
The cubes $V$ and $V^R$ are the only oriented cubes up to isomorphism that are not $(2,3)$-cordial.
\end{theorem}
\begin{proof}
This can be shown by a simple brute force algorithm.
\end{proof}

 \end{document}